\tikzstyle{sq}=[square, minimum width=1cm, text centered, draw=black]
\tikzstyle{filled}=[square, minimum width=1cm, text centered, draw=black, fill=green]
\newtheorem{Theorem}{Theorem}
\newtheorem{Proposition}[Theorem]{Proposition}
\newtheorem{Lemma}[Theorem]{Lemma}
\newtheorem{Definition}{Definition}
\theoremstyle{remark}
\newtheorem*{Remark}{Remark}
\newtheorem*{Notation}{Notation}
\newtheorem*{rems}{Remarks} 
\newenvironment{Remarks}{\begin{rems}\normalfont}{\end{rems}}
\newtheorem{Example}{Example}
\numberwithin{equation}{section}
\newcommand{\qrfac}[2]{{\left({#1}; q\right)_{#2}}} 
\newcommand{\elliptictheta}[1]{\theta\!\left({#1} ; p\right) }
\newcommand{\qbin}[2]{\begin{bmatrix} #1\\ #2 \end{bmatrix}_q}
\author[G.~Bhatnagar]{Gaurav Bhatnagar
}
\address{Ashoka University, Sonipat, Haryana 131029, India}
\email{bhatnagarg@gmail.com}
\author[A.~Kumari]{Archna Kumari}
\address{Department of Mathematics, Indian Institute of Technology, Delhi 110067, India.}
\email{arcyadav856@gmail.com}
\author[M.\ J.\ Schlosser]{Michael J.\ Schlosser}
\address{Fakult\"at f\"ur Mathematik\\
Universit\"at Wien\\
Oskar-Morgenstern-Platz~1\\
A-1090 Vienna, Austria}
\email{michael.schlosser@univie.ac.at}
\title{A weighted extension of Fibonacci numbers}
\subjclass{Primary 05A19 ; Secondary 11B39, 11B65, 33E05}
\keywords{Fibonacci numbers, weighted extensions, elliptic extensions}
\begin{document}

\begin{abstract}
We extend Fibonacci numbers with arbitrary weights and generalize a dozen Fibonacci identities. As a special case, we propose an elliptic extension which extends the $q$-Fibonacci polynomials appearing in Schur's work. The proofs of most of the identities are combinatorial, extending the proofs given by Benjamin and Quinn, and in the $q$ case, by Garrett.  Some identities are proved by telescoping. 
\end{abstract}

\maketitle

\section{Introduction}
The Fibonacci numbers are defined by the recurrence relation
$$F_{n+2} = F_{n+1}+F_n$$
with the initial values $F_0=0$ and $F_1=1$. 
According to Singh~\cite{Singh1985}, these numbers were obtained in ancient Indian texts (600--800 A.D.) in the context of explaining the rules of poetry composition in Sanskrit and Prakrit (two ancient Indian languages). Knowledge at the time was transferred orally; it was useful to present it in the form of verse, so that it was easier to memorise and replicate. To describe the
aforementioned rules, we need the following vocabulary. 
A {\em mora} is a unit of syllables (plural: {\em morae}). The basic units of Sanskrit prosody is a letter having one mora (called {\em laghu}) and with two morae (called {\em guru}). A type of metre (or line) in Sanskrit poetry allows sequences of {\em laghu} and {\em guru} letters in any order, but the metre should contain exactly $n$ morae. It is in counting all possible metres of this type that the so-called Fibonacci numbers arose. 

In more recent times, many Fibonacci identities have been proved in much the same way,  by Benjamin and Quinn~\cite{BQ2003} and  Benjamin, Quinn and Rouse~\cite{BQR2004}. These authors count tilings of a $1\times n$ board by squares and dominos.  The square corresponds to a {\em laghu} and a domino to a
{\em guru}.  A tiling of an $n$-board with squares and dominos corresponds to a metre with exactly $n$ morae made up of the corresponding letters. 

The objective of this paper is to obtain an extension of the Fibonacci numbers by weighted counting---with arbitrary weights---in much the same way as described above. We also suggest an elliptic weight, which results in elliptic extensions of Fibonacci numbers. These extensions have 
additional parameters $q$ (the {\em base}), $p$ (the {\em nome}) as well as two arbitrary parameters $a$ and $b$. In special cases, they reduce to $q$-Fibonacci polynomials that were considered by Schur in work related to the Rogers--Ramanujan identities. 

We work in the context of elliptic 
combinatorics. This field has been developing rapidly, due to efforts by Schlosser, Yoo and others~\cite{ BCK2020a, BCK2020b, MS2007, MS2020b, SY2015,SY2016b, SY2016a,  SY2017b, SY2017a,  SY2018,  SY2021}. Our elliptic extension makes a small change in the definition of elliptic Fibonacci numbers by Schlosser and Yoo~\cite{SY2017a}. This small change allows extensions of many well-known Fibonacci identities. 
Combinatorial proofs in this paper are modeled on
Garrett's unpublished work~\cite{Garrett2004}, which, in turn, are $q$-analogues of the proofs in Benjamin and Quinn's exposition. A very similar approach has been taken by Cigler~\cite{Cigler2003}.

Using weighted counting, we extend many well-known Fibonacci identities. In particular, we give a weighted generalization (with a combinatorial proof) of 
\begin{equation}\label{vajda}
F_{n+i}F_{n+j}-F_{n}F_{n+i+j}=(-1)^{n}F_{i}F_{j}.
\end{equation}
This was proposed as a problem by Everman,  Danese and Venkannayah~\cite{EDV1960} and appears in Vajda~\cite[p.~177, Equation (20a)]{Vajda1989}. It extends some
classical Fibonacci identities, due to Cassini (1680):
$$F_{n}^2 -F_{n-1}F_{n+1} = (-1)^{n-1};$$ 
and Catalan (1879): 
$$F_{n}^{2}-F_{n-i}F_{n+i}=(-1)^{n-i}F_{i}^{2}.$$
A special case of our result was considered by Cigler~\cite{Cigler2003}. 

This paper is organized as follows. In Sections~\ref{sec:def1} and \ref{sec:def2} we give the analytic and combinatorial definitions of the weighted Fibonacci numbers, as well as the proposed elliptic weights. Section~\ref{sec:weigh} contains weighted analogues of several Fibonacci identities. 
Finally, in Section~\ref{sec:telescoping}, we obtain some Fibonacci identities by using the telescoping methods given in \cite{GB2011, GB2016}. 

\section{Weighted Fibonacci numbers: definitions}\label{sec:def1}

\begin{Definition}[First definition of weighted Fibonacci numbers]\label{def1} Let $(w^f_n)_{n\ge 0}$ be a sequence of 
indeterminates called weights.
We define the weighted Fibonacci number $f_{n}$ as $f_{0}=0$, $f_{1}=1,$ and for $n\ge 0$
\begin{equation}\label{EFibdef1}
f_{n+2} = f_{n+1} + w^f_n f_n.
\end{equation}
\end{Definition}
We have suppressed the dependence on the sequence of weights $(w_n^f)$ in our notation. To define the elliptic weights, we need some notation.


The {\bf modified Jacobi theta function} with nome $p$ is defined as
\begin{equation*} \elliptictheta{a} := \prod\limits_{j=0}^{\infty} (1-ap^j) (1-p^{j+1}/a)
\end{equation*}
where $a\neq 0$ and $|p|<1$. 
When the nome $p=0$, the modified theta function $\elliptictheta{a}$ reduces to $(1-a)$.
We use the short-hand notation
\begin{equation*}
\elliptictheta{a_1, a_2, \dots, a_r} := \elliptictheta{a_1} \elliptictheta{a_2}\cdots  \elliptictheta{a_r}.
\end{equation*}

Two important properties of the modified theta function are
\cite[Equation~(11.2.42)]{GR90}
\begin{subequations}
\begin{gather}\label{GR11.2.42}
\elliptictheta{a} =\elliptictheta{p/a} =-a\elliptictheta{1/a},
\end{gather}
and \cite[p.~451, Example~5]{WW1996}
\begin{gather}\label{addf}
\elliptictheta{xy,x/y,uv,u/v}-\elliptictheta{xv,x/v,uy,u/y}=\frac uy\,\elliptictheta{yv,y/v,xu,x/u}.
\end{gather}
\end{subequations}

Let $g(x)$ be an elliptic function, that is, a~doubly periodic meromorphic function of the complex variable~$x$.
Without loss of generality, by the theory of theta functions, we may assume that
\begin{gather*}
g(x)= \frac{\elliptictheta{ a_1q^x,a_2q^x,\dots,a_{r}q^x}} {\elliptictheta{b_1q^x,b_2q^x,\dots,b_rq^x}} c,
\end{gather*}
where $c$ is a constant, and the {\em elliptic balancing condition},
\begin{gather*}
a_1a_2\cdots a_{r}=b_1b_2\cdots b_r,
\end{gather*}
holds. If we write $q=e^{2\pi i\sigma}$, $p=e^{2\pi i\tau}$, with complex $\sigma$, $\tau$, then $g(x)$ is indeed doubly periodic in~$x$ with periods $\sigma^{-1}$ and $\tau\sigma^{-1}$. 

These definitions motivate the form of the elliptic weights defined in this paper. 

\begin{Definition}[Elliptic Fibonacci weights]
For the nome $p$, {\em base} $q$, two independent variables $a$ and $b$, and $n=0, 1, 2, \dots$, define the weight function
\begin{equation}\label{elliptic-fib-weight}
w^{f}_{n}(a,b; q,p) :=\frac{\theta(aq,aq^{2},bq^{1-2n}, aq/b, a/b;p)}{\theta(aq^{1-n},aq^{2-n},bq,aq^{1+n}/b,aq^n/b;p)}q^{n}.
\end{equation}
\end{Definition}
\begin{Notation}
We use the notation  $f_n(a,b; q,p)$ to denote the corresponding elliptic Fibonacci numbers, where we take 
$w^f_n :=w^f_n(a,b; q,p).$ In what follows, we use $(a,b;q,p)$ in our notation when referring to formulas for elliptic Fibonacci numbers. Otherwise, the  reference is to arbitrary (unspecialized) weights. 
\end{Notation}
\begin{Remarks}\
\begin{enumerate}
\item The expression \eqref{elliptic-fib-weight} is elliptic in the variables $\log_q a$, $\log_q b$ and $n$. Instead of $\log_q a$, we may also replace $a$ by $q^a$, then it is elliptic in the variable $a$.  The analogous statement holds for the variable $b$. 
\item We have modified the definition of the elliptic Fibonacci numbers given by
Schlosser and Yoo \cite[Definition 3.6]{SY2018}. Their $S_n(a,b;q,p)$ is related to our definition as follows
\begin{equation}
S_n(aq^{-n},bq^{-2n};q,p) = f_n(a,b;q,p).
\end{equation}
\item In \cite{SY2017a}, the authors defined the elliptic numbers $[n]_{a,b;q,p}$ using the
 weight function:
\begin{equation}\label{elliptic-weights}
W_{a,b;q,p}(k)=
\frac{\theta(aq^{2k+1},bq,bq^2,aq^{-1}/b,a/b;p)}
{\theta(aq,bq^{k+1},bq^{k+2},aq^{k-1}/b, aq^k /b;p)}q^k.
\end{equation}
The elliptic numbers $[n]_{a,b;q,p}$  satisfy the recurrence
\begin{equation}\label{elliptic-analogue}
[n]_{a,b; q, p}+W_{a,b; q,p}(n) [m-n]_{aq^{2n},bq^n; q, p} = [m]_{a,b; q, p}
\end{equation}
and are defined explicitly for complex $n$ as follows:
$$
[n]_{a,b}=
\frac{\theta(q^n,aq^n,bq^2,a/b;p)}
{\theta(q,aq,bq^{n+1},aq^{n-1}/b;p)}.
$$
Note that the weight function for the elliptic Fibonacci numbers is closely connected to \eqref{elliptic-weights}
by the relation:
$$w^f_n(a,b; q,p) = W_{b,a;q,p}(-n).$$

\end{enumerate}

\end{Remarks}

There are also $m$-versions of Fibonacci and $q$-Fibonacci polynomials in the literature. 
In the $q$-case, these arose in work of Schur and appeared, for example, in the $m$-versions of the Rogers--Ramanujan identities, given by Garrett, Ismail and Stanton~\cite{GIS1999}. These authors' work motivates the following definition. 
\begin{Definition}[Weighted $m$-shifted Fibonacci numbers]\label{def-m-fib}
Let $w_n^f$ be as in Definition~\ref{def1}. The weighted $m$-shifted Fibonacci numbers are given by
$f^{(m)}_{0}=0$, $f^{(m)}_{1}=1,$ and for $n\ge 0$
\begin{equation}\label{EFibdef2}
f^{(m)}_{n+2} = f^{(m)}_{n+1} + w^f_{n+m} f^{(m)}_n.
\end{equation}
\end{Definition}
\begin{Notation}
We use the notation  $f_n^{(m)}(a,b; q,p)$ to denote the corresponding $m$-shifted elliptic Fibonacci numbers, when the weight sequence is the one given in \eqref{elliptic-fib-weight}.

\end{Notation}

\subsection*{Special cases} We note some special cases of the elliptic weight function which define extensions of Fibonacci numbers with additional parameters. 
\begin{enumerate}
\item When $p=0$, we obtain an $a,b;q$-analogue of Fibonacci numbers. The weight function reduces to:
$$w^{f}_{n}(a,b; q) :=\frac{(1-aq)(1-aq^{2})(1-bq^{1-2n})(1- aq/b)(1- a/b)}
{(1-aq^{1-n})(1-aq^{2-n})(1-bq)(1-aq^{1+n}/b)(1-aq^n/b)}q^{n}.
$$
\item When $b\to 0$ or $\infty$, we obtain an $a;q$-analogue of Fibonacci numbers. The weight is
\begin{equation}\label{aqweight}
w^{f}_{n}(a; q) :=\frac{(1-aq)(1-aq^{2})}
{(1-aq^{1-n})(1-aq^{2-n})}q^{-n}.
\end{equation}
\item A $b;q$-analogue of the Fibonacci numbers is obtained by taking $a\to 0 $ or $\infty$ in $w^f_n(a,b;q)$. The weight function is 
$$w^{f}_{n}(b; q) :=\frac{(1-bq^{1-2n})}
{(1-bq)}q^{n}.
$$
\item Finally, when $b\to 0$ and $a\to 0$, we obtain the $q$-Fibonacci numbers determined by the weight function $w_n(q)=q^n$. 
\end{enumerate}

\section{Fibonacci numbers from weighted counting}\label{sec:def2}
In this section, we give an alternative combinatorial definition of the weighted Fibonacci numbers.

Consider a $1\times n$ board (henceforth, $n$-board) of square boxes. On this $n$-board one can place a domino ($1\times 2$ tile) or a square ($1\times 1$ tile). If a domino is placed at a location covering the boxes at $i$ and $i+1$, we give it a weight $w_i^f$. (The weight of a square tile is $1$.) Given a tiling $T$ of an $n$-board with dominos and squares, the weight of $T$ is defined as 
$$W^f(T) := \prod_{i} w_i^f ,$$
where the product is over all positions $(i,i+1)$ containing a domino. An empty product is defined to be $1$. 

\begin{Definition}[Second definition of the weighted Fibonacci numbers]\label{def2}
Let $n$ be a non-negative integer. We take $f_0:=0$ and for $n\ge 0$,
$$ f_{n+1} :=\sum_{T_{n}} W^f(T_n)$$ 
where the sum runs over all tilings $T_{n}$ of an $n$-board. We say that $f_{n+1}$ is the {\em \bf total weight} of tilings 
of an $n$-board.
\end{Definition} 

\begin{Remark} 
The weight of a pair $(S, T)$ of tilings is the product of their weights. This corresponds to the weighted counting of a pair of sets. One can view the pair as a concatenation of tilings.
\end{Remark}
%

\begin{Example}
\

\begin{center}
  \begin{tikzpicture}

\draw[ draw=black] (0,0) rectangle   (0.5,0.5);
\node[rectangle] at (0.25,-0.25) {1};
\filldraw[fill=gray!50, draw=black] (0.5,0) rectangle   (1,0.5);
\node[rectangle] at (0.75,-0.25) {2};
\filldraw[fill=gray!50, draw=black] (1,0) rectangle   (1.5,0.5);
\node[rectangle] at (1.25,-0.25) {3};
\draw[ draw=black] (1.5,0) rectangle   (2,0.5);
\node[rectangle] at (1.75,-0.25) {4};
\filldraw[fill=gray!50, draw=black] (2,0) rectangle   (2.5,0.5);
\node[rectangle] at (2.25,-0.25) {5};
\filldraw[fill=gray!50, draw=black] (2.5,0) rectangle   (3,0.5);
\node[rectangle] at (2.75,-0.25) {6};
\draw[ draw=black] (3,0) rectangle   (3.5,0.5);
\node[rectangle] at (3.25,-0.25) {7};

\end{tikzpicture}
\end{center}
In this tiling  the two gray boxes represent a domino and a white box represents a square. It has weight $w_{2}^fw_{5}^f$ as there is one domino in position $(2,3)$ and one in position $(5,6).$
\end{Example}

It is easy to see that the two definitions are equivalent. Let $f_{n}$ be as in Definition~\ref{def2}.  Clearly, $f_0 =0$ and $f_1 =1$. Let $n>1$. Then consider any tiling of an $(n+1)$-board. It's last square is tiled by either a square or a domino. If it is a square, it contributes $W^f(T_n)$ to $f_{n+2}$, where $T_n$ is a tiling of an $n$-board obtained by deleting the last square. Otherwise, it contributes $w_n^f W^f (T^\prime_{n-1})$ where $T^\prime_{n-1}$ is a tiling of the $n-1$ board, obtained by deleting the last 2 squares. Summing over all tilings of an $(n+1)$-board, we see that $f_{n}$ satisfies the recurrence
\eqref{EFibdef1}, and also satisfies Definition~\ref{def1}.

%

The $m$-shifted weighted Fibonacci numbers can be defined similarly. In an $(m+n)$-board, no domino is allowed to be placed in the first $m$ positions; the generating function so obtained is
defined to be
$f_{n+1}^{(m)} $. The number of positions where dominos can be placed is still $n$.
Clearly $f_{n+1}^{(0)} =f_{n+1} $. 

\begin{Example}
Consider the 7-board where no dominos are allowed in the first three positions. 
The following tiling is allowed, and has weight $w_4^fw_{6}^f$.
\begin{center}
  \begin{tikzpicture}
\draw[ draw=black] (0,0) rectangle   (0.5,0.5);
\node[rectangle] at (0.25,-0.25) {1};
\draw[draw=black] (0.5,0) rectangle   (1,0.5);
\node[rectangle] at (0.75,-0.25) {2};
\draw[ draw=black] (1,0) rectangle   (1.5,0.5);
\node[rectangle] at (1.25,-0.25) {3};
\filldraw[fill=gray!50, draw=black] (1.5,0) rectangle   (2,0.5);
\node[rectangle] at (1.75,-0.25) {4};
\filldraw[fill=gray!50,draw=black] (2,0) rectangle   (2.5,0.5);
\node[rectangle] at (2.25,-0.25) {5};
\filldraw[fill=gray!50, draw=black] (2.5,0) rectangle   (3,0.5);
\node[rectangle] at (2.75,-0.25) {6};
\filldraw[fill=gray!50, draw=black] (3,0) rectangle   (3.5,0.5);
\node[rectangle] at (3.25,-0.25) {7};

\end{tikzpicture}
\end{center}
Consider all possible tilings of a 7-board where no domino is allowed in positions 1, 2 and 3. We obtain 
$$f_{5}^{(3)}=1+w_{4}^f+w_{5}^f+w_{6}^f+w_{4}^fw_{6}^f.$$

\end{Example}

A {\bf fault} of a tiling is defined as a position $m$ such that the tiling does not include a domino in position $(m,m+1).$ Occasionally we will need to split a tiling at a fault and count the possible tilings to the right and to the left of the fault. While tilings to the left of a fault are counted by weighted Fibonacci numbers, tilings to the right have shifted weights, and are thus counted by shifted Fibonacci numbers. 

%
%

Next, we define an expression analogous to the binomial coefficient. 
\begin{Proposition} Let $g^n_k $ be the weighted sum of tilings of $n$ tiles with exactly $k$ dominos. Then  $g^n_k $ is determined by the following:
\begin{subequations}
\begin{align}
g_k^n  &= w_{n+k-1}^f g_{k-1}^{n-1} + g_k^{n-1} ; \label{Newbino}\\
\intertext{where the initial conditions are as follows:}
g_0^n  &= 1 \text{ for } n\ge 0; \\
g_k^n  &= 0 \text{ for } k < 0 \text{ and } k>n.
\end{align}
\end{subequations}
\end{Proposition}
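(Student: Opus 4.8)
The plan is to give a combinatorial proof by conditioning on the rightmost tile, exactly in the spirit of the derivation of the recurrence~\eqref{EFibdef1} from Definition~\ref{def2}. First I would record the bookkeeping that drives everything: a tiling enumerated by $g_k^n$ consists of $n$ tiles, of which $k$ are dominos and $n-k$ are squares, so it covers a board of length $2k+(n-k)=n+k$. Keeping this board length $n+k$ in view is essential, since the weight of a domino depends on the position it occupies.

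Next I would partition the tilings counted by $g_k^n$ according to whether the last (rightmost) tile is a square or a domino. If it is a square, it occupies box $n+k$; deleting it leaves a tiling with $n-1$ tiles, still $k$ of them dominos, covering a board of length $(n-1)+k=n+k-1$. Because a square carries weight $1$, the total weight is unchanged, so these tilings contribute $g_k^{n-1}$. If instead the last tile is a domino, it occupies boxes $(n+k-1,n+k)$ and carries weight $w_{n+k-1}^f$; deleting it leaves a tiling with $n-1$ tiles, now $k-1$ of them dominos, covering a board of length $(n-1)+(k-1)=n+k-2$. Factoring out the weight $w_{n+k-1}^f$ of the deleted domino, these tilings contribute $w_{n+k-1}^f\,g_{k-1}^{n-1}$. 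Adding the two disjoint cases yields the recurrence~\eqref{Newbino}.

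Finally I would check the boundary conditions straight from the combinatorial definition. The quantity $g_0^n$ counts tilings of $n$ tiles using no dominos, which is the single all-squares tiling of weight $1$, so $g_0^n=1$ for all $n\ge 0$. There are no tilings with a negative number of dominos, giving $g_k^n=0$ for $k<0$, and a tiling of $n$ tiles cannot contain more than $n$ dominos, giving $g_k^n=0$ for $k>n$.

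The main obstacle—indeed essentially the only subtlety—is the indexing of the weight: one must consistently use the board length $n+k$ rather than $n$, so that removing the terminal domino produces exactly the factor $w_{n+k-1}^f$ that appears in~\eqref{Newbino}. Once this is handled, the argument is the standard square-or-domino dichotomy and the identity follows immediately.
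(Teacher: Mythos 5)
Your proof is correct and matches the paper's own argument: the paper likewise conditions on whether the final tile (on a board of length $n+k$) is a square or a domino at position $(n+k-1,n+k)$, yielding the contributions $g_k^{n-1}$ and $w_{n+k-1}^f\,g_{k-1}^{n-1}$ respectively, and checks the initial conditions directly. Your careful emphasis on tracking the board length $n+k$ is exactly the point the paper's proof hinges on as well.
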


\begin{Remark}
As before, we denote by $g_k^n(a,b; q,p)$ when the weights are specified by \eqref{elliptic-fib-weight}.
\end{Remark}
\begin{proof}
The initial conditions are easy to check. Further, it is easy to check that these determine $g_k^n$ for non-negative integers $n$ and $k$. 

The total number of squares in a tiling with exactly $k$ dominos and $n-k$ squares is $n+k$. The last square can either be covered by a square tile or there is a domino at $(n+k-1,n+k)$. If the last tile is a square, then the sum of weights of the tilings is $g_{k}^{n-1} $. Otherwise, the domino has weight $w_{n+k-1}^f$ and tilings after deleting this domino have weight 
$g_{k-1}^{n-1} $. The total weight when there is a domino at $(n+k-1,n+k)$ is thus
$w_{n+k-1}^f g_{k-1}^{n-1} $. This shows \eqref{Newbino}.
\end{proof}
\begin{Remarks} \ 
\begin{enumerate}
\item Note that, for $n\ge 1$,
$$g_1^n  = \sum_{i=1}^{n} w_{i}^f$$
and 
$$g_n^n  = w_1^fw_3^f\cdots w_{2n-1}^f.$$
\item In the $q$-case, $g^n_k $ reduces to $q^{k^2}\qbin{n}{k}$. 
\item If we define
$$\begin{bmatrix} n\\ k\end{bmatrix}_w := \frac{g^n_k}
{w_1^f w_3^f\cdots w_{2k-1}^f},$$
then $\begin{bmatrix} n\\ k\end{bmatrix}_w$ satisfies
\begin{equation}\label{wbin}
\begin{bmatrix} n\\ k\end{bmatrix}_w
= \frac{w_{n+k-1}^f}{w_{2k-1}^f}
\begin{bmatrix} n-1\\ k-1\end{bmatrix}_w 
+\begin{bmatrix} n-1\\ k\end{bmatrix}_w .
\end{equation}
\item In addition, in the $a;q$-case (that is, when $p=0$ and $b\to 0$), the weight is given by \eqref{aqweight}. In this case,  
$\begin{bmatrix} n\\ k\end{bmatrix}_w$ factorizes and 
reduces to:
\begin{equation}\label{qbin-aq}
\begin{bmatrix} n\\ k\end{bmatrix}_{a;q} = \frac{\qrfac{q^{k-1}/a}{k}}{\qrfac{q^{n-1}/a}{k}}\begin{bmatrix} n\\ k\end{bmatrix}_q.
\end{equation}
It is easy to verify this formula satisfies the three term recurrence relation \eqref{wbin} in the $a;q$ case. 
The factorization in the $a;q$ case is related to one obtained for $S_n(a;q)$ in \cite{SY2018}.
\end{enumerate}
\end{Remarks}

\section{Proofs that really weigh}\label{sec:weigh}
We now prove analogues of several Fibonacci identities, using counting---or rather weighing---tilings of boards. 
We would like to remind the reader that in any of the theorems of this paper, the corresponding results for $q$-Fibonacci numbers are obtained by formally replacing $w_k^f$ by $q^k$. Replacing all the $w_k^f$ by $1$ reduces any result to one for the Fibonacci numbers. 

\begin{Theorem} Let $n$ be a positive integer. Then
\begin{equation*}
\sum_{k=1}^{n} w^{f}_{k} f_{k} 
=f_{n+2} -1. 
\end{equation*}
\end{Theorem}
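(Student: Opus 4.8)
The plan is to give a combinatorial proof in the spirit of the rest of this section, since the quantity $f_{n+2}-1$ has a clean tiling interpretation. By Definition~\ref{def2}, $f_{n+2}$ is the total weight of all tilings of an $(n+1)$-board, and the all-squares tiling is the unique tiling of weight $1$. Hence $f_{n+2}-1$ is exactly the total weight of those tilings of the $(n+1)$-board that contain at least one domino, and the goal reduces to showing that this total weight equals $\sum_{k=1}^{n} w^f_k f_k$.

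First I would classify each such tiling by the starting position $k$ of its \emph{last} domino. On an $(n+1)$-board a domino covers cells $(k,k+1)$ with $k\le n$, so $k$ ranges over $1,\dots,n$, matching the summation range. Conditioning on the last domino sitting at $(k,k+1)$: every cell in positions $k+2,\dots,n+1$ must then be covered by a square and contributes the factor $1$; the domino itself contributes the weight $w^f_k$; and positions $1,\dots,k-1$ carry an arbitrary tiling of a $(k-1)$-board, whose total weight is $f_k$ (again by Definition~\ref{def2}). Thus the tilings whose last domino is at $(k,k+1)$ contribute $w^f_k f_k$, and summing over $k=1,\dots,n$ accounts for every tiling with at least one domino, giving the identity.

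Alternatively, the statement follows at once by telescoping from the defining recurrence \eqref{EFibdef1}: rewriting it as $w^f_k f_k = f_{k+2}-f_{k+1}$ and summing over $k=1,\dots,n$ collapses the right-hand side to $f_{n+2}-f_2$, and $f_2 = f_1 + w^f_0 f_0 = 1$ since $f_0=0$ and $f_1=1$.

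There is no serious obstacle here; the only thing to watch is the index bookkeeping. One must keep straight the shift between board length and Fibonacci index (an $m$-board has total weight $f_{m+1}$), the range $1\le k\le n$ forced by choosing the last domino's position, and the evaluation $f_2=1$ that produces the constant term on the right.
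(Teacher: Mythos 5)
Your combinatorial argument is exactly the paper's proof: weigh the tilings of an $(n+1)$-board containing at least one domino, classify them by the position $k$ of the last domino, and note that each class contributes $w^f_k f_k$ while the all-squares tiling accounts for the subtracted $1$. Your alternative telescoping derivation via $w^f_k f_k = f_{k+2}-f_{k+1}$ is also correct (and fits the spirit of the paper's later telescoping section), but the main proof you give coincides with the paper's own.
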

\begin{proof}
We weigh the tilings of an $(n+1)$-board with at least one domino.

The total number of tilings of an $(n+1)$-board is $f_{n+2} $. To obtain the right-hand side, we remove the tiling 
consisting of all square tiles from these.  

To obtain the left-hand side, suppose the position of the last domino on the board is at the $k$th tile, for some $1\le k \le n$. For each such $k$, the tilings of a $(k-1)$-board have weight $f_{k} $. Since the $k$th tile is a domino, the total weight of such tilings is $w_{k}^f f_{k} $.  The sum over all $k$ gives the left-hand side.
\end{proof}

\begin{Theorem} \label{Fib-71} Let $n$ and $m$ be positive integers. Then 
\begin{equation*}
f_{m+n+1} 
=f_{m+1}  f^{(m)}_{n+1}  
+w^{f}_{m}f_{m}  f^{(m+1)}_{n} .
\end{equation*}
\end{Theorem}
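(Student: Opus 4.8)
The plan is to establish the identity combinatorially, by weighing the tilings of an $(m+n)$-board---whose total weight is $f_{m+n+1}$ by Definition~\ref{def2}---and splitting them according to the behaviour at position $m$. Recall that $m$ is a \emph{fault} exactly when no domino covers the boxes $(m,m+1)$. Every tiling of the $(m+n)$-board lies in exactly one of two classes: those with a fault at $m$, and those in which a single domino straddles boxes $m$ and $m+1$. I would sum the total weight over each class separately and check that the two contributions are precisely the two terms on the right-hand side.

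First I would handle the tilings with a fault at $m$. Such a tiling decomposes uniquely, with no tile crossing between the two pieces, into a tiling of the sub-board on boxes $1,\dots,m$ and a tiling of the sub-board on boxes $m+1,\dots,m+n$. The left piece is an ordinary $m$-board, contributing total weight $f_{m+1}$. The right piece is a board of $n$ boxes in which a domino occupying boxes $(m+j,m+j+1)$ carries the weight $w^f_{m+j}$; by the combinatorial description of the $m$-shifted Fibonacci numbers, this is precisely the total weight $f^{(m)}_{n+1}$ of the tilings of an $(m+n)$-board whose first $m$ boxes are frozen as squares. Hence this class contributes $f_{m+1}\,f^{(m)}_{n+1}$.

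Next I would handle the tilings in which a domino covers $(m,m+1)$. Deleting this domino, which carries weight $w^f_m$, leaves a tiling of the sub-board on boxes $1,\dots,m-1$ together with a tiling of the sub-board on boxes $m+2,\dots,m+n$. The left piece is an ordinary $(m-1)$-board of total weight $f_m$, while the right piece is a board of $n-1$ boxes in which a domino on boxes $(m+1+j,m+2+j)$ carries the weight $w^f_{m+1+j}$; by the same reasoning this matches the total weight $f^{(m+1)}_n$. Thus this class contributes $w^f_m\,f_m\,f^{(m+1)}_n$, and adding the two contributions gives the stated identity.

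The one point demanding care---and the step I expect to be the crux---is the identification of the tilings to the right of the fault with the shifted Fibonacci numbers. The weights on the right piece are genuinely shifted: a domino in relative position $j$ carries weight $w^f_{m+j}$ (respectively $w^f_{m+1+j}$) rather than $w^f_j$. One must verify that this shifted weighting is exactly the one encoded by $f^{(m)}_{n+1}$ and $f^{(m+1)}_n$, in which the first $m$ (respectively $m+1$) boxes are forced to be squares of weight $1$ and the remaining dominos keep their original board weights. Once this correspondence is confirmed, the two classes are manifestly disjoint and exhaustive, and the identity follows at once.
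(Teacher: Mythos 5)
Your proof is correct and follows essentially the same route as the paper: weigh tilings of an $(m+n)$-board, split according to whether position $m$ is a fault or is covered by a domino at $(m,m+1)$, and identify the right-hand pieces with the shifted Fibonacci numbers $f^{(m)}_{n+1}$ and $f^{(m+1)}_{n}$. The point you flag as the crux---that dominos to the right of the split keep their original board weights, which is exactly the combinatorial definition of the $m$-shifted numbers---is precisely how the paper defines $f^{(m)}_{n+1}$, so your argument matches the paper's proof step for step.
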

\begin{proof}
We weigh tilings of an $(m+n)$-board in two ways. 

The total weight of tilings of an $(m+n)$-board is $f_{m+n+1} $, which is the left-hand side.

To get the right-hand side, consider the $m$th position. Either there is a fault at position $m$, or there is a domino at $(m,m+1)$. 

If there is a fault at $m$, we can split the board after position $m$ to obtain a pair of boards of length $m$ and $n$. The weight of tilings of the first board is $f_{m+1} $ and the weight of the second board
is the $m$-shifted Fibonacci number $f^{(m)}_{n+1}(a,b; q,p)$. This gives the total weight as
$$f_{m+1}  f^{(m)}_{n+1} .$$ 

In case there is a domino at the position $(m,m+1)$, this domino contributes the weight $w^{f}_{m}$. On breaking the board after position $m-1$ and $m+1$, we see
the weight of the pair of tilings of the remaining $1\times m-1$ and $1\times n-1$ boards is
$f_{m}  f^{(m+1)}_{n} .$
The total weight is the product is $$w^{f}_{m}f_{m}  f^{(m+1)}_{n} .$$
Adding the two gives the right-hand side. 
\end{proof}

Next, we give an extension of \eqref{vajda} which was highlighted in the introduction. 

\begin{Theorem}\label{Fib-81} Let $n$ be a positive integer. Then
\begin{equation*}
f_{n+i+1} f^{(i+1)}_{n+j+1} 
=
f_{n+i+j+2}  f^{(i+1)}_{n}  
+(-1)^nw^{f}_{i+1}w^{f}_{i+2} \cdots w^{f}_{i+n}
f_{i+1}  f_{j+1}^{(n+i+1)}  .
\end{equation*}
\end{Theorem}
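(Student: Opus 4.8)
The plan is to give a combinatorial, sign-reversing involution in the style of Benjamin--Quinn and Garrett, proving the equivalent form
\[
f_{n+i+1} f^{(i+1)}_{n+j+1} - f_{n+i+j+2} f^{(i+1)}_{n} = (-1)^n w^{f}_{i+1}\cdots w^{f}_{i+n}\, f_{i+1} f^{(n+i+1)}_{j+1}.
\]
First I would interpret each product as a weighted count of a \emph{pair} of tilings drawn on two rows sharing one coordinate axis, so that a domino covering $(l,l+1)$ always carries weight $w^{f}_{l}$ regardless of its row. Recalling that a shifted number $f^{(m)}_{r+1}$ is the total weight of tilings of an $r$-board whose cells are labelled $m+1,\dots,m+r$, the first product places a free row on cells $1,\dots,n+i$ above a shifted row on cells $i+2,\dots,i+n+j+1$, while the second places a free row on $1,\dots,n+i+j+1$ above a shifted row on $i+2,\dots,i+n$. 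In both cases the two rows overlap exactly on cells $i+2,\dots,n+i$, and each shifted row is frozen (all squares) to the left of cell $i+2$.

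Next I would call a position $k$ in the overlap a \emph{common fault} if neither row has a domino crossing $(k,k+1)$, and let the involution swap the two rows' tails lying to the right of the rightmost common fault. Because every domino keeps its absolute position, the swap preserves the total weight; because it is taken at a fault, no domino is severed; and because it only exchanges tails to the right of the frozen initial segments, the shift conditions on the two rows are untouched. Its net effect is to interchange the two row-lengths, carrying a pair counted by the first product to one counted by the second and conversely. Since the two products enter the difference with opposite signs, the involution is sign-reversing and cancels every pair that possesses a common fault.

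Finally I would analyse the fixed points, i.e.\ the pairs with \emph{no} common fault. Absence of a common fault over the overlap forces the two rows into an interlocking brick-wall arrangement of dominoes at the consecutive positions $i+1,i+2,\dots,i+n$, one row carrying the odd-offset dominoes and the other the even-offset ones; this contributes exactly $w^{f}_{i+1}\cdots w^{f}_{i+n}$, while the cells to the left of $i+1$ form a free $i$-board (contributing $f_{i+1}$) and those to the right of $i+n+1$ form a free shifted board (contributing $f^{(n+i+1)}_{j+1}$). The parity of $n$ determines whether the interlocking strip ends flush with the shorter row of the first product or of the second; consequently the fixed points belong entirely to the first product when $n$ is even and entirely to the second when $n$ is odd, so in the difference they appear with net sign $(-1)^n$. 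This produces the claimed right-hand side.

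I expect the main obstacle to lie in this last step's boundary and parity bookkeeping: one must pin down the overlap and frozen segments precisely, check that the tail-swap genuinely respects the shift condition even at the extreme faults, and verify that the length constraints of the two rows force the interlocking strip to consist of exactly the $n$ dominoes weighted $w^{f}_{i+1},\dots,w^{f}_{i+n}$ and to arise for precisely one parity of $n$ --- this is what simultaneously yields the factor $(-1)^n$ and the clean factorisation $f_{i+1} f^{(n+i+1)}_{j+1}$.
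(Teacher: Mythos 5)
Your proposal is correct and takes essentially the same route as the paper's proof: both stack the two boards with an offset of $i+1$, swap tails at the last common fault to match pairs between the two products, and identify the fixed points as the interlocking all-domino strip at positions $i+1,\dots,i+n$, whose parity constraint places them in the first product for $n$ even and the second for $n$ odd, yielding the term $(-1)^n w^{f}_{i+1}\cdots w^{f}_{i+n} f_{i+1} f^{(n+i+1)}_{j+1}$. The boundary bookkeeping you flag as the main obstacle (in particular, that the fault positions must include the extreme position $i+1$, where the shifted row is trivially breakable, for the forcing argument to start) is precisely the detail the paper also handles, and only informally.
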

\begin{Remark} When $i=0$, in the $q$-case, an equivalent form of this identity has been given a combinatorial proof by Cigler~\cite[Equation (5.6)]{Cigler2003}.
\end{Remark}
\begin{proof}
Consider two boards. One of the boards is of length $i+n$; the other of length $n+j$, placed one below the other, with the lower board shifted $i+1$ places, as shown. We count the number of weighted tilings of these boards. 

\begin{align*}
&
\begin{tikzpicture}
\draw[decorate, decoration={brace}, yshift=2ex] (-2,0.25) -- node[above=0.4ex] {$i$} (0,0.25);
\draw[ draw=black] (-2,0) rectangle   (-1.5,0.5);
\draw[draw=black] (-1.5,0) rectangle   (-1,0.5);
\draw[ draw=black] (-1,0) rectangle   (-.5,0.5);
\draw[ draw=black] (-.5,0) rectangle   (0,0.5);
\draw[decorate, decoration={brace}, yshift=2ex] (0,0.25) -- node[above=0.4ex] {$n$} (4.5,0.25);
\draw[ draw=black] (0,0) rectangle   (0.5,0.5);
\draw[draw=black] (0.5,0) rectangle   (1,0.5);
\draw[ draw=black] (1,0) rectangle   (1.5,0.5);
\draw[ draw=black] (1.5,0) rectangle   (2,0.5);
\draw[ draw=black] (2,0) rectangle   (2.5,0.5);
\draw[ draw=black] (2.5,0) rectangle   (3,0.5);
\draw[ draw=black] (3,0) rectangle   (3.5,0.5);
\draw[ draw=black] (3.5,0) rectangle   (4,0.5);
\draw[ draw=black] (4,0) rectangle   (4.5,0.5);
\draw [dashed] (3.5, -1.75) -- (3.5, 1.25);
\node [label] at (3.5, 1.5) {$k$};
\draw[decorate, decoration={brace, mirror}, yshift=-2ex] (-2,-0.75) -- node[below=0.4ex] {$i+1$} 
(.5,-0.75);
\draw[ draw=black] (.5,-1) rectangle   (1,-0.5);
\draw[ draw=black] (1,-1) rectangle   (1.5,-0.5);
\draw[draw=black] (1.5,-1) rectangle   (2,-0.5);
\draw[ draw=black] (2,-1) rectangle   (2.5,-0.5);
\draw[draw=black] (2.5,-1) rectangle   (3,-0.5);
\draw[ draw=black] (3,-1) rectangle   (3.5,-0.5);
\draw[ draw=black] (3.5,-1) rectangle   (4,-0.5);
\draw[ draw=black] (4,-1) rectangle   (4.5,-0.5);
\draw[ draw=black] (4.5,-1) rectangle   (5,-0.5);
\draw[decorate, decoration={brace, mirror}, yshift=-2ex] (0.5,-0.75) -- node[below=0.4ex] {$n$} (5,-0.75);
\draw[decorate, decoration={brace, mirror}, yshift=-2ex] (5,-0.75) -- node[below=0.4ex] {$j$} (8,-0.75);
\draw[ draw=black] (5,-1) rectangle   (5.5,-0.5);
\draw[draw=black] (5.5,-1) rectangle   (6,-0.5);
\draw[ draw=black] (6,-1) rectangle   (6.5,-0.5);
\draw[ draw=black] (6.5,-1) rectangle   (7,-0.5);
\draw[ draw=black] (7,-1) rectangle   (7.5,-0.5);
\draw[ draw=black] (7.5,-1) rectangle   (8,-0.5);
\end{tikzpicture}
\\
& 
\begin{tikzpicture}
\draw[decorate, decoration={brace}, yshift=2ex] (-2,0.25) -- node[above=0.4ex] {$i$} (0,0.25);
\draw[ draw=black] (-2,0) rectangle   (-1.5,0.5);
\draw[draw=black] (-1.5,0) rectangle   (-1,0.5);
\draw[ draw=black] (-1,0) rectangle   (-.5,0.5);
\draw[ draw=black] (-.5,0) rectangle   (0,0.5);
\draw[decorate, decoration={brace}, yshift=2ex] (0,0.25) -- node[above=0.4ex] {$n+1$} (5,0.25);
\draw[ draw=black] (0,0) rectangle   (0.5,0.5);
\draw[draw=black] (0.5,0) rectangle   (1,0.5);
\draw[ draw=black] (1,0) rectangle   (1.5,0.5);
\draw[ draw=black] (1.5,0) rectangle   (2,0.5);
\draw[ draw=black] (2,0) rectangle   (2.5,0.5);
\draw[ draw=black] (2.5,0) rectangle   (3,0.5);
\draw[ draw=black] (3,0) rectangle   (3.5,0.5);
\draw[ draw=black] (3.5,0) rectangle   (4,0.5);
\draw[ draw=black] (4,0) rectangle   (4.5,0.5);
\draw[ draw=black] (4.5,0) rectangle   (5,0.5);
\draw [dashed] (3.5, -1.75) -- (3.5, 1.25);
\node [label] at (3.5, 1.5) {$k$};
\draw[decorate, decoration={brace}, yshift=2ex] (5,0.25) -- node[above=0.4ex] {$j$} (8,0.25);
\draw[ draw=black] (5,0) rectangle   (5.5,0.5);
\draw[draw=black] (5.5,0) rectangle   (6,0.5);
\draw[ draw=black] (6,0) rectangle   (6.5,0.5);
\draw[ draw=black] (6.5,0) rectangle   (7,0.5);
\draw[ draw=black] (7,0) rectangle   (7.5,0.5);
\draw[ draw=black] (7.5,0) rectangle   (8,0.5);
\draw[decorate, decoration={brace, mirror}, yshift=-2ex] (-2,-0.75) -- node[below=0.4ex] {$i+1$} 
(.5,-0.75);
\draw[ draw=black] (.5,-1) rectangle   (1,-0.5);
\draw[ draw=black] (1,-1) rectangle   (1.5,-0.5);
\draw[draw=black] (1.5,-1) rectangle   (2,-0.5);
\draw[ draw=black] (2,-1) rectangle   (2.5,-0.5);
\draw[draw=black] (2.5,-1) rectangle   (3,-0.5);
\draw[ draw=black] (3,-1) rectangle   (3.5,-0.5);
\draw[ draw=black] (3.5,-1) rectangle   (4,-0.5);
\draw[decorate, decoration={brace, mirror}, yshift=-2ex] (0.5,-0.75) -- node[below=0.4ex] {$n-1$} (4,-0.75);
\end{tikzpicture}
\end{align*}

The number of tilings of the first $i+n$-board is $f_{n+i+1} $. Visualize it as an $i$-board
concatenated with an $n$-board. 
 The second board is shifted by $i+1$, so with weight $f^{(i+1)}_{n+j+1} $. Thus the total weight of tilings of such  pairs of boards is 
$$f_{n+i+1} f^{(i+1)}_{n+j+1} .$$
This gives the left-hand side. 

For the right-hand side, consider the last common fault of both the boards. The fault is only possible
in the parts of the board which are $n$-boards. 
Let it be at the $k$th cell. Break both the tilings after this fault and swap the tails to get a pair consisting of an $i+(n+1)+j$-tiling and an $(n-1)+j$-tiling (shifted by $i+1$). Their weight is $f_{n+i+j+2} f^{(i+1)}_{n} $. 

There is a one-to-one correspondence between all pairs of such tilings that have a fault. In case the two tilings do not have fault, there is an error term, which we now show is 
$$ (-1)^n w^{f}_{i+1}w^{f}_{i+2} \cdots w^{f}_{i+n} f_{i+1}  f_{j+1}^{(n+i+1)}  .$$

The tilings consisting of the $i$-board on top and $j$-board at the bottom have no role to play in the fault. 
The $n$-tilings do not have common fault if both the tilings consist of only dominos. If $n$ is odd, then $(n+1)$, $(n-1)$, both are even and we can have an all domino tiling. This contributes a weight $w^{f}_{i+1}w^{f}_{i+2} \cdots w^{f}_{i+n}$. This is multiplied by the weight of tilings in the $i$ cells
(with contribution $f_{i+1} $ and the $j$ cells (contribution: $f_{j+1}^{(n+i+1))} $). 
But since the two $n$-boards on the left-hand side cannot have an all-domino tiling, we get
\begin{equation*} 
f_{n+i+1} f^{(i+1)}_{n+j+1}  =
 f_{n+i+j+2} f^{(i+1)}_{n}  
 -w^{f}_{i+1}w^{f}_{i+2} \cdots w^{f}_{i+n}f_{i+1} f_{j+1}^{(n+i+1)} .
 \end{equation*}
If $n$ is even, then $(n+1)$  and $(n-1)$ are both odd and cannot have an all domino tiling on the right-hand side. But on the left-hand side, the two $n$-boards can have all domino tilings, contributing the weight 
$$ w^{f}_{i+1}w^{f}_{i+2} \cdots w^{f}_{i+n} f_{i+1}  f_{j+1}^{(n+i+1)}  .$$
Therefore, when $n$ is even,  
\begin{equation*} 
f_{n+i+1} f^{(i+1)}_{n+j+1}  =
 f_{n+i+j+2} f^{(i+1)}_{n}  
 +w^{f}_{i+1}w^{f}_{i+2} \cdots w^{f}_{i+n}f_{i+1} f_{j+1}^{(n+i+1)} .
 \end{equation*}
 This completes the proof.
\end{proof}

\begin{Theorem}\label{Fib-21} Let $n$ be a positive integer. Then
\begin{equation*}
\sum_{k=1}^n  \frac{w^{f}_{1}w^{f}_{3} \dots w^{f}_{2n-1}}{w^{f}_{1}w^{f}_{3} \dots w^{f}_{2k-1}}f_{2k} 
=f_{2n+1} -w^{f}_{1}w^{f}_{3} \dots w^{f}_{2n-1}. 
\end{equation*}
\end{Theorem}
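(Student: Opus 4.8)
The plan is to weigh, in two ways, the tilings of a $2n$-board that are \emph{not} the all-domino tiling. For the right-hand side, recall from Definition~\ref{def2} that $f_{2n+1}$ is the total weight of all tilings of a $2n$-board. By the first Remark following the Proposition on $g^n_k$, the product $w^f_1 w^f_3 \cdots w^f_{2n-1}$ equals $g^n_n$, the weight of the unique tiling of the $2n$-board built entirely from $n$ dominos. Subtracting it from $f_{2n+1}$ therefore leaves the total weight of all tilings of the $2n$-board that contain at least one square.

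To match the left-hand side, I would classify each such tiling by the position $\ell$ of its rightmost square. The key step is a parity observation: since every cell to the right of $\ell$ is covered by dominos, the number $2n-\ell$ of those cells must be even, so $\ell$ is even, say $\ell=2k$ with $1\le k\le n$. Because cell $2k$ carries a square, no domino straddles the boundary between cells $2k-1$ and $2k$, and the tiling splits cleanly into an arbitrary tiling of cells $1,\dots,2k-1$, the square on cell $2k$, and an all-domino tiling of cells $2k+1,\dots,2n$.

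Weighing the three pieces, the left block is a $(2k-1)$-board of total weight $f_{2k}$, the square contributes $1$, and the right block consists of the dominos at $(2k+1,2k+2),\dots,(2n-1,2n)$, of weight
$$w^f_{2k+1}w^f_{2k+3}\cdots w^f_{2n-1}=\frac{w^f_1 w^f_3\cdots w^f_{2n-1}}{w^f_1 w^f_3\cdots w^f_{2k-1}}.$$
Summing the product of the three weights over $k=1,\dots,n$ reproduces the left-hand side and equates it to the weight of all tilings containing at least one square, which is the right-hand side.

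I expect the only real obstacle to be the parity argument together with the clean-split claim: one must check that the rightmost square always lands on an even cell (so that $k$ is an integer and the domino tail has an integral number of tiles) and that the square at cell $2k$ prevents any domino from crossing the cut at $2k-1$. Once these two points are secured, identifying the quotient of $w$-products as the weight of the domino tail is purely formal, and the summation is immediate.
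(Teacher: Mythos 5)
Your proposal is correct and follows essentially the same argument as the paper: both weigh the tilings of a $2n$-board containing at least one square, obtain the right-hand side by subtracting the weight $w^f_1 w^f_3 \cdots w^f_{2n-1}$ of the all-domino tiling from $f_{2n+1}$, and obtain the left-hand side by conditioning on the position $2k$ of the last square (with the same parity observation forcing that position to be even). Your write-up is, if anything, slightly more careful than the paper's in making the clean-split claim and the identification $w^f_1 w^f_3 \cdots w^f_{2n-1} = g^n_n$ explicit, but there is no substantive difference in approach.
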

\begin{proof}
We weigh the tilings of a $2n$-board with at least one square. 

The tilings of a $2n$-board have weight $f_{2n+1} $. The tiling of $2n$-board with no square is tiling with all dominos, with weight $w^{f}_{1}w^{f}_{3} \dots w^{f}_{2n-1}$. On subtracting this, we get the right-hand side.

To obtain the left-hand side, consider the last square of the tiling of a $2n$-board. Suppose it is at the $i$th tile for $0\le i \le 2n$. Since it is the last square, after this all tiles are dominos. This implies $i$ has to be an even number, say $i$ is $2k$. On the left side of this square, the tilings of $(2k-1)$-board have weight $f_{2k} $. The dominos on the right side of $i$th tile contribute 
$$w^{f}_{2k+1}w^{f}_{2k+3} \dots w^{f}_{2n-1} = \frac{w^{f}_{1}w^{f}_{3} \dots w^{f}_{2n-1}}{w^{f}_{1}w^{f}_{3} \dots w^{f}_{2k-1}}.$$ 
Summing over $i$ gives us the left-hand side.
\end{proof}

\begin{Theorem}\label{Fib-31} Let $n$ be a positive integer. Then
\begin{equation*}
\sum_{k=0}^n  \frac{w^{f}_{2}w^{f}_{4} \dots w^{f}_{2n}}{w^{f}_{2}w^{f}_{4} \dots w^{f}_{2k}}f_{2k+1} 
=f_{2n+2} . 
\end{equation*}
\end{Theorem}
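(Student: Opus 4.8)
The plan is to mirror the combinatorial argument used for Theorem~\ref{Fib-21}, but now to weigh the tilings of a $(2n+1)$-board rather than a $2n$-board. First I would observe that, by Definition~\ref{def2}, the right-hand side $f_{2n+2}$ is exactly the total weight of all tilings of a $(2n+1)$-board. The crucial structural point is that $2n+1$ is odd, so no tiling can consist of dominos alone: every tiling contains at least one square. This is precisely why the right-hand side carries no correction term, in contrast with the odd-length analogue in Theorem~\ref{Fib-21}.

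To produce the left-hand side, I would classify each tiling according to the position $i$ of its last square. Since every tiling of the $(2n+1)$-board has a square, $i$ is well defined, and all tiles lying to the right of position $i$ must be dominos. As these dominos cover the cells $i+1, \dots, 2n+1$, an even number $2n+1-i$ of cells is covered, which forces $i$ to be odd; write $i=2k+1$ with $0\le k\le n$. The cells $1,\dots,2k$ to the left of this last square form a $2k$-board, whose tilings have total weight $f_{2k+1}$ by Definition~\ref{def2}.

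The key computation is the weight of the forced block of dominos to the right of the last square. These dominos occupy the positions $(2k+2,2k+3),(2k+4,2k+5),\dots,(2n,2n+1)$ and therefore contribute the weights $w^f_{2k+2}, w^f_{2k+4}, \dots, w^f_{2n}$, whose product is
\[
\prod_{j=k+1}^{n} w^f_{2j}
= \frac{w^f_2 w^f_4 \cdots w^f_{2n}}{w^f_2 w^f_4 \cdots w^f_{2k}}.
\]
Multiplying this by $f_{2k+1}$ and summing over $k=0,\dots,n$ then reproduces the left-hand side term by term. I would also verify the two boundary cases for consistency: $k=n$ places the last square in the final cell with an empty domino block (empty product equal to $1$), while $k=0$ places the unique square in the first cell with the remaining $2n$ cells tiled entirely by dominos.

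I do not expect a genuine obstacle here beyond careful bookkeeping; the one place to be attentive is the parity and indexing step — confirming that the last square must sit in an odd cell and that the right-hand block of dominos contributes exactly the ratio of products of even-indexed weights appearing in the statement. Getting this indexing precisely right is what makes the sum on the left collapse to match the clean, correction-free right-hand side, with the odd board length doing the essential work of guaranteeing a last square exists.
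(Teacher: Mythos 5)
Your proof is correct and is exactly the argument the paper intends: the paper omits the details of Theorem~\ref{Fib-31}, stating only that they parallel Theorem~\ref{Fib-21}, and your classification by the position of the last square (forced to be odd by parity, with the trailing dominos contributing the ratio of even-indexed weights) is precisely that parallel argument, with the absence of a correction term correctly explained by the impossibility of an all-domino tiling of an odd-length board. The only blemish is cosmetic: you call Theorem~\ref{Fib-21} the ``odd-length analogue'' when it concerns the even-length ($2n$-)board.
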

\begin{proof}
We weigh the tilings of a $(2n+1)$-board. The details are similar to the proof of Theorem~\ref{Fib-21} and are omitted. 
%
%
\end{proof}

\begin{Theorem}\label{Fib-41} Let  $n$ be a positive integer. Then
\begin{equation*}
\sum_{k=0}^n  \frac{w^{f}_{1}w^{f}_{2} \dots w^{f}_{n}}{w^{f}_{1}w^{f}_{2} \dots w^{f}_{k}}f_{k+1} ^2
=f_{n+1}  f_{n+2} . 
\end{equation*}
\end{Theorem}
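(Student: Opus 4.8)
The plan is to mimic the classical fault-line argument of Benjamin and Quinn, weighing pairs of tilings. The right-hand side $f_{n+1}f_{n+2}$ is the total weight of all pairs $(S,T)$, where $S$ tiles an $n$-board and $T$ tiles an $(n+1)$-board; I would stack the two boards with their left ends aligned, so that a domino occupying horizontal cells $(i,i+1)$ in either board carries the same weight $w^f_i$. This identifies the right-hand side as a single weighted count.

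To recover the left-hand side I condition on the \emph{last common fault} of the pair, that is, the largest position $k$ with $0\le k\le n$ at which both $S$ and $T$ have a fault (with $k=0$ meaning there is no internal common fault). To the left of this fault the two boards split cleanly into a pair of independently tiled $k$-boards, whose combined weight is $f_{k+1}^2$. Summing the contributions over $k$ should produce $\sum_{k=0}^n (w^f_{k+1}w^f_{k+2}\cdots w^f_n)\,f_{k+1}^2$, which is exactly the left-hand side once $w^f_{k+1}\cdots w^f_n$ is rewritten as $\frac{w^f_1\cdots w^f_n}{w^f_1\cdots w^f_k}$.

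The crux is analyzing the region to the right of the last common fault, where the top piece has length $n-k$ and the bottom piece has length $n-k+1$, and by maximality of $k$ they share no further common fault. I would argue position by position that avoiding a common fault forces the two pieces to interlock: at each cell exactly one board carries a domino crossing into the next cell while the other is broken there, so the pieces are tiled by a single square together with dominos laid in staggered, brick-like fashion. A short parity check on the two lengths $n-k$ and $n-k+1$ shows that exactly one of the two conceivable interlockings is consistent, so this tail configuration is \emph{unique}. Its weight is the product of the domino weights, which collects to precisely $w^f_{k+1}w^f_{k+2}\cdots w^f_n$, the lone square covering the extra cell of the longer board and contributing weight $1$.

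The main obstacle is exactly this last step: proving that the fault-free tail is unique and that its weight equals the consecutive product $w^f_{k+1}\cdots w^f_n$. Everything else (the reading of the two sides as weighted counts, and the decomposition at the last common fault) follows the template already used in the proof of Theorem~\ref{Fib-81}. To confirm the index range of the sum I would check the boundary cases $k=0$ (no internal common fault, so the whole pair is a single interlocking tail of weight $w^f_1\cdots w^f_n$, with $f_1^2=1$ on the left) and $k=n$ (the tail is the single leftover square of the $(n+1)$-board, of weight $1$, with $f_{n+1}^2$ on the left).
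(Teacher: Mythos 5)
Your proposal is correct and follows essentially the same route as the paper: both weigh pairs of tilings of an $n$-board and an $(n+1)$-board stacked with left ends aligned, condition on the last common fault at position $k$ (contributing $f_{k+1}^2$ from the left part), and observe that the fault-free tail is the unique staggered all-domino configuration with a single square at position $k+1$ in the odd-length row, of weight $w^f_{k+1}w^f_{k+2}\cdots w^f_n$. Your explicit treatment of the boundary case $k=0$ is in fact slightly more careful than the paper, whose proof nominally restricts to $1\le k\le n$ while the sum starts at $k=0$.
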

\begin{proof}
We weigh the tilings of a pair consisting of an $n$-board and an $(n+1)$-board.

The right-hand side is the total weight of the tilings of this pair of boards.

\begin{center}
  \begin{tikzpicture}

\draw[ draw=black] (0,0) rectangle   (0.5,0.5);
\draw[draw=black] (0.5,0) rectangle   (1,0.5);
\draw[ draw=black] (1,0) rectangle   (1.5,0.5);
\draw[ draw=black] (1.5,0) rectangle   (2,0.5);
\fill  (2.25, 0.25) circle[radius=1pt];
\fill  (2.50, 0.25) circle[radius=1pt];
\fill  (2.75, 0.25) circle[radius=1pt];
\draw[draw=black] (3,0) rectangle   (3.5,0.5);
\draw[draw=black] (3.5,0) rectangle   (4,0.5);
\draw[ draw=black] (4,0) rectangle   (4.5,0.5);

\draw[ draw=black] (0,-1) rectangle   (0.5,-0.5);
\draw[draw=black] (0.5,-1) rectangle   (1,-0.5);
\draw[ draw=black] (1,-1) rectangle   (1.5,-0.5);
\draw[ draw=black] (1.5,-1) rectangle   (2,-0.5);
\fill  (2.25, -0.75) circle[radius=1pt];
\fill  (2.50, -0.75) circle[radius=1pt];
\fill  (2.75, -0.75) circle[radius=1pt];
\draw[draw=black] (3,-1) rectangle   (3.5,-0.5);
\draw[draw=black] (3.5,-1) rectangle   (4,-0.5);

\node[rectangle] at (0.25,-1.25) {$1$};
\node[rectangle] at (0.75,-1.25) {$2$};
\node[rectangle] at (1.25,-1.25) {$3$};
\node[rectangle] at (1.75,-1.25) {$4$};

\node[rectangle] at (3.75,-1.25) {$n$};
\node[rectangle] at (4.5,-1.25) {$n+1$};


\end{tikzpicture}
\end{center}

For the left-hand side, place the $(n+1)$-board directly above the $n$-board as shown. Consider the location of the last common fault. Suppose the last fault is at the $k$th tile, where $1\le k \le n$. Then to avoid the fault after the $k$th tile, only dominos should be placed except for a single square placed on the tile $k+1$ in the row whose length is odd. The weight of the two boards for cells $1$ to $k$ is $f_{k+1} ^2$.  The dominos after the $k$th cell contribute  
$$(w^{f}_{k+1}w^{f}_{k+3} \dots w^{f}_{n-1} )(w^{f}_{k+2}w^{f}_{k+4} \dots w^{f}_{n}) = w^{f}_{k+1}w^{f}_{k+2} \dots w^{f}_{n-1} w^{f}_{n},$$ which is 
$$\frac{w^{f}_{1}w^{f}_{2} \dots w^{f}_{n}}{w^{f}_{1}w^{f}_{2} \dots w^{f}_{k} }.$$ 
Now we sum over $k$ to obtain the result. 
\end{proof}

\begin{Theorem} \label{Fib-61} Let $n$ be a positive integer. Then
\begin{equation*}
\sum_{k=0}^n g^n_{n-k}  f^{(2n-k)}_{k} 
=f_{2n} .
\end{equation*}
\end{Theorem}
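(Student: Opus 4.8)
The plan is to weigh the tilings of a $(2n-1)$-board, whose total weight is $f_{2n}$ by Definition~\ref{def2}, and to split each tiling after a fixed \emph{number of tiles} rather than at a fault (as was done in the earlier theorems). The first observation I would record is that every tiling of a $(2n-1)$-board uses at least $n$ tiles: if it has $d$ dominos, then $2d \le 2n-1$ forces $d \le n-1$, so the number of tiles is $(2n-1-2d)+d = 2n-1-d \ge n$. Hence it is always legitimate to cut a tiling immediately after its $n$-th tile.

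Next I would count the cells on each side of the cut. Suppose the first $n$ tiles consist of $n-k$ dominos and $k$ squares; they cover $2(n-k)+k = 2n-k$ cells, namely cells $1,\dots,2n-k$, and the remaining tiles cover the $k-1$ cells $2n-k+1,\dots,2n-1$. Summing the weights of all admissible left parts (tilings of the first $2n-k$ cells using exactly $n$ tiles, of which $n-k$ are dominos) gives precisely $g^n_{n-k}$ by the definition of $g^n_k$. The tail is an arbitrary tiling of the last $k-1$ cells whose domino at $(i,i+1)$ carries the original weight $w^f_i$; since these cells begin at position $2n-k+1$, this is exactly the shifted count $f^{(2n-k)}_k$ (in the notation of Definition~\ref{def-m-fib}, a tail domino at $(2n-k+j,\,2n-k+j+1)$ has weight $w^f_{2n-k+j}$, which is the shifted weight appearing in $f^{(2n-k)}_k$). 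Because the weight of a tiling is multiplicative over its tiles, the total weight of all tilings cut in this way is the product $g^n_{n-k}\,f^{(2n-k)}_k$.

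Summing over the number of squares $k$ among the first $n$ tiles, which ranges over $0 \le k \le n$, then recovers the left-hand side and yields $f_{2n} = \sum_{k=0}^n g^n_{n-k}\,f^{(2n-k)}_k$. The boundary value $k=0$ is automatically harmless: it would require the first $n$ tiles to be $n$ dominos covering $2n > 2n-1$ cells, which is impossible, and indeed the corresponding factor $f^{(2n)}_0 = 0$ vanishes, so including or excluding this term does not affect the sum.

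The step I expect to need the most care is the bookkeeping at the cut: verifying that $n$ tiles with $k$ squares occupy exactly $2n-k$ cells (so that the tail has exactly $k-1$ cells and inherits the shift $2n-k$), and confirming that a domino in the tail really matches the shifted weight appearing in $f^{(2n-k)}_k$. The conceptual point, in contrast with the fault-based arguments in the previous theorems, is that here the natural statistic to fix is the \emph{tile count} rather than the last common fault; once that is recognized, the decomposition is clean and the identity follows directly.
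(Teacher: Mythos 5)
Your proof is correct and takes essentially the same approach as the paper's: both weigh the tilings of a $(2n-1)$-board, classify them by the number $k$ of squares among the first $n$ tiles, and identify the weight of the initial segment as $g^n_{n-k}$ and that of the remaining $(k-1)$-cell tail as $f^{(2n-k)}_k$. Your write-up is simply more explicit about the bookkeeping (the cell count $2n-k$, the matching of shifted weights, and the vanishing $k=0$ term, which the paper instead handles by noting there must be at least one square).
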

\begin{proof}
We weigh tilings of an $(2n-1)$-board.

The right-hand side gives the total weight $f_{2n} $. 

For the left-hand side, consider the number $k$ of squares that appear among the first $n$ tiles. 
(There must be a square. If there were only dominos, then the board would have even length.) 
So the first $n$ tiles consist of $k$ squares and $(n-k)$ dominos, with weight $g^{n}_{n-k} $. 
The remaining board has length $k-1$. The weight is given by the shifted weighted Fibonacci number
$f^{(2n-k)}_{k} $.

Summing over $k$ gives the desired result.
\end{proof}

\begin{Theorem} \label{Fib-61b} Let $n$ be a non-negative integer. Then
\begin{equation*}
\sum_{k=0}^n g_{n-k}^n f_{k+1}^{(2n-k)}=f_{2n+1}.
\end{equation*}
\end{Theorem}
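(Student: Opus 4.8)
The plan is to adapt, almost verbatim, the combinatorial argument used for Theorem~\ref{Fib-61}, the only essential change being that the board now has \emph{even} length. I would weigh the tilings of a $2n$-board in two ways. On the one hand, by Definition~\ref{def2} the total weight of all tilings of a $2n$-board is $f_{2n+1}$, which is exactly the right-hand side.

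On the other hand, I would stratify the tilings according to the number $k$ of square tiles occurring among the \emph{first} $n$ tiles of the tiling (read left to right), where $0\le k\le n$; this is well defined because any tiling of a $2n$-board with $d$ dominos has $2n-d\ge n$ tiles, so a first block of $n$ tiles always exists. These first $n$ tiles then consist of $k$ squares and $n-k$ dominos, so their total weight is $g^n_{n-k}$ by the Proposition defining $g^n_k$. Since $k$ squares and $n-k$ dominos cover $k+2(n-k)=2n-k$ cells, the first $n$ tiles occupy positions $1$ through $2n-k$, and the split after the $n$th tile falls at a genuine tile boundary (no domino is cut). The residual part of the board is a $k$-board beginning at position $2n-k+1$; its tilings carry weights shifted by $2n-k$ and are therefore counted by $f^{(2n-k)}_{k+1}$. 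Multiplying and summing over $k$ yields $\sum_{k=0}^n g^n_{n-k}\,f^{(2n-k)}_{k+1}$, the left-hand side.

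The one place demanding care—and the only substantive difference from Theorem~\ref{Fib-61}—is the bookkeeping of lengths and the range of summation. In Theorem~\ref{Fib-61} the board has odd length $2n-1$, which forces at least one square among the first $n$ tiles (an all-domino prefix would cover an even number of cells) and leaves a residual board of length $k-1$, producing $f^{(2n-k)}_k$. Here the board length $2n$ is even, so the all-domino case $k=0$ is genuinely allowed: it contributes $g^n_n\,f^{(2n)}_1=w^f_1w^f_3\cdots w^f_{2n-1}\cdot 1$, matching a fully-domino-tiled $2n$-board. The residual length is now exactly $k$, which is why the shifted Fibonacci number is indexed $f^{(2n-k)}_{k+1}$ (consistent with the convention that $f_{m+1}$ weighs an $m$-board). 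I expect the main thing to verify, rather than any genuine obstacle, to be precisely this indexing: that every tiling is counted once, that the residual length equals $k$, and that $k$ ranges over all of $\{0,1,\dots,n\}$.
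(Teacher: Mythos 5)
Your proposal is correct and is precisely the adaptation the paper intends: the paper omits this proof, stating only that it is similar to Theorem~\ref{Fib-61}, and your argument carries out that adaptation faithfully (weighing a $2n$-board, splitting after the first $n$ tiles, with residual length $k$ giving $f^{(2n-k)}_{k+1}$). Your bookkeeping of the indexing and of the now-nonvanishing $k=0$ (all-domino) term is exactly right.
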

\begin{proof}
The proof is similar to that of Theorem~\ref{Fib-61} and is omitted.
\end{proof}   

\section{Fibonacci identities from telescoping}\label{sec:telescoping}

Many Fibonacci identities can be proved by telescoping. In this section we use the technique in \cite{GB2011, GB2016} to  obtain two more weighted Fibonacci identities. 

We use the alternate form of the telescoping lemma \cite[eq.\ (2.2)]{GB2011}, given by
\begin{equation}\label{telescoping-lemma2}
\sum_{k=1}^n t_k \frac{u_1u_2\cdots u_{k-1}}{v_1v_2\cdots v_{k}}
= \frac{u_1u_2\cdots u_{n}}{v_1v_2\cdots v_{n}} - 1,
\end{equation}
where $t_k=u_k-v_k.$

The form of Definition~\ref{def1} is such that \eqref{telescoping-lemma2} applies for the weighted Fibonacci numbers.  This was one motivation of using Definition~\ref{def1} rather than earlier definitions for the weighted Fibonacci numbers.

\begin{Theorem} Let $n$ be a positive integer. 
\begin{subequations}
\begin{gather}
\sum_{k=1}^n  (-1)^{k} \frac{f_{k+2} }{w^{f}_{1}w^{f}_{2} \dots w^{f}_{k}}
=(-1)^{n}\frac{f_{n+1} }{w^{f}_{1}w^{f}_{2} \dots w^{f}_{n}}-1. \label{Fib-5}
\\
\sum_{k=1}^n  w^{f}_{k-1}w^{f}_{k} \prod_{j=1}^k \frac{1}{1+w^{f}_{j}} f_{k-1} 
=1-\prod_{j=1}^n \frac{1}{1+w^{f}_{j}} f_{n+2} .\label{Fib-6}
\end{gather}
\end{subequations}
\end{Theorem}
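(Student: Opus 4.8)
The plan is to derive both identities directly from the telescoping lemma \eqref{telescoping-lemma2}. This theorem sits in the telescoping section precisely because Definition~\ref{def1} writes the recurrence as $f_{k+2}=f_{k+1}+w^f_k f_k$, a shape in which the difference $t_k=u_k-v_k$ collapses cleanly. In each case the work is to guess $u_k$ and $v_k$ so that the product $u_1\cdots u_n/(v_1\cdots v_n)$ matches the closed form on the right, and then to verify that $t_k\,u_1\cdots u_{k-1}/(v_1\cdots v_k)$ reproduces the summand on the left. Throughout I would work in the field of rational functions in the indeterminate weights, where each $f_k$ with $k\ge 1$ is a nonzero polynomial, so the ratios $f_{k+1}/f_k$ appearing below are legitimate.

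For \eqref{Fib-5} I would take $v_k=w^f_k$ and $u_k=-f_{k+1}/f_k$. The running products then telescope to $u_1\cdots u_{k-1}=(-1)^{k-1}f_k$ (using $f_1=1$) and $v_1\cdots v_k=w^f_1\cdots w^f_k$, which already produces the factor $(-1)^{k}/(w^f_1\cdots w^f_k)$ and, at $k=n$, matches the right-hand side $(-1)^n f_{n+1}/(w^f_1\cdots w^f_n)$. The single computation is $t_k=u_k-v_k=-(f_{k+1}+w^f_k f_k)/f_k=-f_{k+2}/f_k$ by the recurrence, after which $t_k\,u_1\cdots u_{k-1}/(v_1\cdots v_k)=(-1)^k f_{k+2}/(w^f_1\cdots w^f_k)$, exactly the summand.

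For \eqref{Fib-6} I would take $v_k=1+w^f_k$ and $u_k=f_{k+2}/f_{k+1}$, so that $u_1\cdots u_n=f_{n+2}$ (telescoping, $f_2=1$) and $v_1\cdots v_n=\prod_{j=1}^n(1+w^f_j)$. Multiplying \eqref{telescoping-lemma2} through by $-1$ turns its right side into $1-u_1\cdots u_n/(v_1\cdots v_n)$ and its summand into $(v_k-u_k)\,u_1\cdots u_{k-1}/(v_1\cdots v_k)$, matching the form of the claim. The key simplification uses the recurrence twice: first $u_k=f_{k+2}/f_{k+1}=1+w^f_k f_k/f_{k+1}$ gives $v_k-u_k=w^f_k(f_{k+1}-f_k)/f_{k+1}$, and then $f_{k+1}-f_k=w^f_{k-1}f_{k-1}$ gives $v_k-u_k=w^f_{k-1}w^f_k f_{k-1}/f_{k+1}$. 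Since $u_1\cdots u_{k-1}/(v_1\cdots v_k)=f_{k+1}\prod_{j=1}^k 1/(1+w^f_j)$, the factor $f_{k+1}$ cancels and the summand becomes $w^f_{k-1}w^f_k f_{k-1}\prod_{j=1}^k 1/(1+w^f_j)$, as required; note the $k=1$ term vanishes automatically because $f_0=0$.

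The main obstacle is entirely in selecting $u_k$ and $v_k$: once the guess is fixed, every remaining step is a one-line use of the recurrence together with a telescoping of products. A secondary point I would state carefully is the legitimacy of the ratios $f_{k+1}/f_k$, which is why the argument is phrased over the field of rational functions in the weights; substituting $w^f_k\mapsto q^k$ or $w^f_k\mapsto 1$ afterwards recovers the $q$-Fibonacci and classical Fibonacci specializations.
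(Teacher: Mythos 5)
Your proof is correct and is essentially the paper's proof: both apply the telescoping lemma \eqref{telescoping-lemma2}, and your choices $(u_k,v_k)=\bigl(-f_{k+1}/f_k,\,w^f_k\bigr)$ for \eqref{Fib-5} and $\bigl(f_{k+2}/f_{k+1},\,1+w^f_k\bigr)$ for \eqref{Fib-6} are exactly the paper's choices $\bigl(f_{k+1},\,-w^f_k f_k\bigr)$ and $\bigl(f_{k+2},\,(1+w^f_k)f_{k+1}\bigr)$ rescaled by $-1/f_k$ and $1/f_{k+1}$ respectively, a per-index normalization under which both the summands $t_k\,u_1\cdots u_{k-1}/(v_1\cdots v_k)$ and the product $u_1\cdots u_n/(v_1\cdots v_n)$ are invariant. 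The paper's normalization keeps $u_k$ and $v_k$ polynomial in the weights, so your (correctly handled) caveat about working in the field of rational functions and the nonvanishing of $f_k$ becomes unnecessary there.
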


\begin{Remark}
We may take special cases of \cite[Theorem~2.6]{GB2011} to obtain six identities. Four of them are already covered in \S\ref{sec:weigh}. It is convenient to simply use \eqref{telescoping-lemma2} rather than refer to previous results to obtain these. 
\end{Remark}

\begin{proof}
To prove \eqref{Fib-5}, take 
$$u_k = f_{k+1}   \;\; \text{ and }
v_k =-w^{f}_{k}f_{k}$$
in \eqref{telescoping-lemma2}.
Thus
${u_{k-1}}/{v_{k}} =-{1}/{w^{f}_{k}}$,
and 
$$t_k =u_k-v_k = f_{k+2}.$$

For \eqref{Fib-6}, take
$$
u_k = f_{k+2}  \;\;\text{ and }
v_k =(1+w^{f}_{k})f_{k+1} .
$$
Thus 
${u_{k-1}}/{v_{k}} ={1}/{(1+w^{f}_{k})}$, and
$$t_k =u_k-v_k = -w^{f}_{k}w^{f}_{k-1}f_{k-1}.$$
Substituting in \eqref{telescoping-lemma2}, we obtain the result.
\end{proof}

We conclude with another application of  \eqref{telescoping-lemma2},
which follows from Theorem~\ref{Fib-71}. 

\begin{Theorem}\label{Fib-11}
Let $r$ and $s$ be non-negative integers, and $n$ a positive integer. Then
\begin{multline}
\sum_{k=1}^n \frac{(-1)^k}{w_{r+1}^f w_{r+2}^f\cdots w_{r+k}^f}
\frac{f_{r+s+2k+1}}{f_{r+1}f_{s+1}^{(r)}}
\frac{f_{s+1}^{(r)}\cdots f_{s+k}^{(r+k-1)}}
{f_{s+1}^{(r+2)}\cdots f_{s+k}^{(r+k+1)}}
\\
=\frac{(-1)^n}{w_{r+1}^f w_{r+2}^f\cdots w_{r+n}^f}
\frac{f_{r+n+1}}{f_{r+1}}
\frac{f_{s+2}^{(r+1)}\cdots f_{s+n+1}^{(r+n)}}
{f_{s+1}^{(r+2)}\cdots f_{s+n}^{(r+n+1)}}
-1.
\end{multline}

\end{Theorem}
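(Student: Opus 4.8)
The plan is to apply the telescoping lemma \eqref{telescoping-lemma2} after reading off candidate factors $u_k$ and $v_k$ from the shape of the identity. The right-hand side is visibly of the form $(u_1\cdots u_n)/(v_1\cdots v_n)-1$, and the product $f_{r+n+1}/f_{r+1}$ together with the sign $(-1)^n$ and the ratio of $m$-shifted Fibonacci numbers suggests the assignment
\[
u_k = f_{r+k+1}\, f_{s+k+1}^{(r+k)}, \qquad
v_k = -\,w_{r+k}^f\, f_{r+k}\, f_{s+k}^{(r+k+1)}.
\]
The key observation, which is the conceptual heart of the argument, is that the difference $t_k=u_k-v_k$ is governed by Theorem~\ref{Fib-71}. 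Indeed,
\[
t_k = f_{r+k+1}\, f_{s+k+1}^{(r+k)} + w_{r+k}^f\, f_{r+k}\, f_{s+k}^{(r+k+1)},
\]
and applying Theorem~\ref{Fib-71} with $m=r+k$ and $n=s+k$ (both positive since $k\ge 1$) identifies this sum with $f_{(r+k)+(s+k)+1}=f_{r+s+2k+1}$. This is the only place the earlier structural identity enters, and it is exactly what makes the telescoping collapse.

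Next I would evaluate the telescoping products. The ordinary Fibonacci factors collapse, since
\[
\frac{\prod_{j=1}^{k-1} f_{r+j+1}}{\prod_{j=1}^{k} f_{r+j}} = \frac{1}{f_{r+1}},
\]
while the $k$ factors of $-1$ coming from $v_1,\dots,v_k$ produce $(-1)^{-k}=(-1)^k$. Tracking the shift superscripts on the $m$-shifted numbers, the numerator contributes $f_{s+2}^{(r+1)}\cdots f_{s+k}^{(r+k-1)}$ and the denominator $f_{s+1}^{(r+2)}\cdots f_{s+k}^{(r+k+1)}$. Multiplying by $t_k=f_{r+s+2k+1}$, the summand $t_k\,(u_1\cdots u_{k-1})/(v_1\cdots v_k)$ becomes
\[
\frac{(-1)^k\, f_{r+s+2k+1}}{w_{r+1}^f\cdots w_{r+k}^f\, f_{r+1}}\,
\frac{f_{s+2}^{(r+1)}\cdots f_{s+k}^{(r+k-1)}}{f_{s+1}^{(r+2)}\cdots f_{s+k}^{(r+k+1)}},
\]
which coincides with the stated summand once one rewrites $f_{s+2}^{(r+1)}\cdots f_{s+k}^{(r+k-1)}=\bigl(f_{s+1}^{(r)}\cdots f_{s+k}^{(r+k-1)}\bigr)/f_{s+1}^{(r)}$ and absorbs the factor $1/f_{s+1}^{(r)}$.

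Finally, the same collapse of the ordinary factors applied to the full products $\prod_{j=1}^{n} u_j$ and $\prod_{j=1}^{n} v_j$ gives $(u_1\cdots u_n)/(v_1\cdots v_n)$ equal to the first term on the right-hand side, and the lemma \eqref{telescoping-lemma2} supplies the trailing $-1$. The only genuine obstacle is discovering the decomposition above: one needs the ratio $u_k/v_k$ to reproduce the telescoped right-hand side \emph{and} the difference $t_k=u_k-v_k$ to be expressible through Theorem~\ref{Fib-71} simultaneously. Once that assignment is written down, the remaining work is routine cancellation of telescoping products, with the sign $(-1)^k$ and the shift superscripts being the only bookkeeping that requires care.
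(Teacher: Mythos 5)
Your proposal is correct and is essentially identical to the paper's own proof: the same choice $u_k=f_{r+k+1}f_{s+k+1}^{(r+k)}$, $v_k=-w_{r+k}^f f_{r+k}f_{s+k}^{(r+k+1)}$, the same identification $t_k=u_k-v_k=f_{r+s+2k+1}$ via Theorem~\ref{Fib-71} (with $m=r+k$, $n=s+k$), and the same substitution into the telescoping lemma \eqref{telescoping-lemma2}. Your explicit verification of the product cancellations and the bookkeeping of signs and shift superscripts is more detailed than the paper's, which leaves those routine steps to the reader.
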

\begin{proof}
Take
$$
u_k=f_{r+k+1}f_{s+k+1}^{(r+k)}
\quad\text{and}\quad
v_k=-w_{r+k}^f f_{r+k}f_{s+k}^{(r+k+1)}.
$$
Thus
$$
\frac{u_{k-1}}{v_k}=
\frac{-f_{s+k}^{(r+k-1)}}{w_{r+k}^f f_{s+k}^{(r+k+1)}},
$$
and
$$
t_k=u_k-v_k=f_{r+s+2k+1}
$$
by virtue of Theorem~\ref{Fib-71}.
Substituting in \eqref{telescoping-lemma2} proves the result.
\end{proof}
We believe this identity is new even in the $q$-case. We remark that Theorem~\ref{Fib-81} yields yet another identity by telescoping. Finally, we mention that by varying the roles of $u_k$, $v_k$
and $t_k$ above we may get more identities.

\section*{Acknowledgements}
The research of Michael Schlosser was partially supported by the
Austrian Science Fund (FWF), grant~P~32305.
%

\end{document}